\theoremstyle{plain}
\newtheorem{theorem}{Theorem}[section]
\begin{document}
\title{How not to discretize the control}

\def\N{\mathbb{N}}
\def\R{\mathbb{R}}
\def\T{\mathcal{T}}
\newcommand\norm[1]{\lVert#1\rVert}

\author{Daniel Wachsmuth\footnote{Institut f\"ur Mathematik,
Universit\"at W\"urzburg,
97074 W\"urzburg, Germany, {\tt
 daniel.wachsmuth@mathematik.uni-wuerzburg.de}} \and Gerd Wachsmuth\footnote{Technische Universität Chemnitz,
Faculty of Mathematics,
09107 Chemnitz, Germany, {\tt gerd.wachsmuth@mathematik.tu-chemnitz.de}}}
\maketitle
\begin{abstract}
In this short note, we address the discretization of optimal control
problems with higher order polynomials. We develop
a \emph{necessary and sufficient condition} to ensure that weak limits of
discrete feasible controls are feasible for the original problem.
We show by means of a simple counterexample that a naive discretization by
higher order polynomials can lead to non-feasible limits of sequences of discrete
solutions.
\end{abstract}

\section{Introduction}

We consider the discretization of the` optimal control problem
\begin{equation}\label{P}
 \min_{u\in L^2(\Omega)} J(u) \quad \text{ subject to } u \ge0.
\end{equation}
Here, $\Omega\subset \R^d$ is a bounded open set.
We set $U:=L^2(\Omega)$. The objective is given by $J: U \to \R$
and we assume that a (not necessarily unique) global solution $\bar u$ of \eqref{P} exists.
This can be guaranteed under standard assumptions on $J$.
In particular, we have in mind to choose $J$ as the reduced cost functional of a optimal control
problem subject to a partial differential equation (PDE).
For an introduction to optimal control problems for PDEs, we refer to
\cite{Troeltzsch2010:1}.
In order to numerically solve the problem, it has to be discretized.
We will investigate a particular choice of discretization, which consists of
discretizing the controls on subdivisions of the domain $\Omega$ by, e.g., piecewise
polynomial functions.
Note that we do \emph{not} address the related question ``How to not discretize the control?'',
which was popularized by Hinze \cite{hin05}.

\section{Discretization}
We consider a sequence of discretizations, indexed by an integer $n \in \N$.
We associate with each $n$ the following objects:
\begin{enumerate}[label=\bf(A\arabic*),itemindent=*]
\item\label{A1} a finite dimensional subspace $U_n\subset U$ with fixed basis $\{\phi_n^1, \ldots, \phi_n^{N_n}\}$,
 \item\label{A2} a set $\T_n$ of open, pairwise disjoint
elements $T$ with  $\bar \Omega=\overline{\bigcup_{T\in \T_n}T}$
and  $\operatorname{diam}(T)\le h_n$ for all $T\in \T_n$, where $h_n \searrow 0$,
\item\label{A3} and a functional $J_n : U_n\to \R$ approximating $J$.
\end{enumerate}
As a discretization for \eqref{P}, we choose
\begin{equation}\label{Ph}
 \min J_n(u_n)
 \quad \text{ subject to } u_n = \sum\nolimits_{i=1}^{N_n}
 \lambda_i \, \phi_n^i, \quad \text{where } \ \lambda_i\ge0 \quad \forall i = 1, \ldots, N_n.
\end{equation}
That is, the non-negativity constraint $u\ge0$ is replaced by a non-negativity constraint
on the coordinates of $u_n$ with respect to the chosen basis of $U_n$.
We assume that the discrete problem has a global solution $\bar u_n$.
In order to study convergence with respect to $n \to \infty$ we will impose the following conditions
on the sequence $\{(U_n,\T_n,J_n)\}_{n\in \N}$:
\begin{enumerate}[label=\bf(A\arabic*),itemindent=*,resume]
 \item\label{A4} $J(u)\le \liminf_{n \to \infty} J_n(u_n)$ for every sequence $\{u_n\}_{n \in \N}$ with $u_n\in U_n$ $\forall n \in \N$ and $u_n\rightharpoonup u$ in $U$ for $n \to \infty$.
 \item\label{A5}
	 $J(\bar u) \ge \limsup_{n \to \infty} J_n(v_n)$
	 for some sequence $\{v_n\}_{n \in \N}$ with $v_n \in U_n$.
\end{enumerate}
Assumptions \ref{A4}, \ref{A5} are slightly weaker than the $\Gamma$-convergence of $J_n$ towards $J$.
We remark that these assumptions are fulfilled for standard FE discretizations of
optimal control problems subject to partial differential equations \cite{hpuu,Troeltzsch2010:1}.

\section{Convergence of discrete approximations}
In this section we will show that it is sufficient that the basis functions $\phi_n^i$
have non-negative integral on the cells $T\in \T_n$ to guarantee that weak limits
of discrete solutions are feasible.
Note, that non-negativity of the basis functions is not required.

\begin{theorem}\label{theo31} Let the sequence $\{(U_n,\T_n,J_n)\}_{n \in \N}$
satisfy \ref{A1}--\ref{A5}
and, in addition,
\[
	\int_T \phi_n^i \ge0 \quad\forall n \in \N, T\in \T_n, i=1\dots N_n^i.
 \]
 Then, every weak limit $u$ of feasible points $u_n$ of \eqref{Ph} is feasible for \eqref{P},
 and the weak limit (if it exists) of global solutions $\bar u_n$ of \eqref{Ph}
 is a global solution of \eqref{P}.
\end{theorem}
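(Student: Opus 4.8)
The plan is to establish the two assertions separately: the first — that weak limits of discretely feasible controls satisfy $u\ge0$ — carries the new idea and crucially uses the hypothesis $\int_T\phi_n^i\ge0$, while the second is a standard $\Gamma$-convergence argument built on it. For the first assertion, let $u_n=\sum_{i=1}^{N_n}\lambda_i^n\phi_n^i$ with all $\lambda_i^n\ge0$ be feasible for \eqref{Ph} and assume $u_n\rightharpoonup u$ in $U=L^2(\Omega)$. The starting point is that $\int_T u_n=\sum_{i=1}^{N_n}\lambda_i^n\int_T\phi_n^i\ge0$ for every $T\in\T_n$ and every $n$: although $u_n$ itself may change sign, its mean over each mesh cell is non-negative. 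The difficulty in passing to the limit is that $\T_n$ depends on $n$, so one cannot test against a fixed characteristic function; to get around this I would use the elementwise averaging operator $P_n$, given on each $T\in\T_n$ by $(P_n\psi)|_T:=|T|^{-1}\int_T\psi$ (and $0$ on the null set not covered by cells).

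Fix $\psi\in C_c(\Omega)$ with $\psi\ge0$. Then three facts combine: (i) $P_n\psi\ge0$ since $\psi\ge0$; (ii) $\int_\Omega u_n\,P_n\psi=\sum_{T\in\T_n}\big(|T|^{-1}\int_T\psi\big)\big(\int_T u_n\big)\ge0$, a sum of products of non-negative reals; (iii) $P_n\psi\to\psi$ strongly in $L^2(\Omega)$, because $\psi$ is uniformly continuous, $\operatorname{diam}(T)\le h_n\searrow0$, and $\Omega$ is bounded. Since $\{u_n\}$ is bounded in $L^2$ (being weakly convergent) and $u_n\rightharpoonup u$, writing $\int_\Omega u_n P_n\psi=\int_\Omega u_n(P_n\psi-\psi)+\int_\Omega u_n\psi$ shows $\int_\Omega u_n P_n\psi\to\int_\Omega u\psi$; with (ii) this yields $\int_\Omega u\psi\ge0$. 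As this holds for every non-negative $\psi\in C_c(\Omega)$, a routine density argument (or testing against $u^-=\max\{-u,0\}$, which forces $\int_{\{u<0\}}u^2=0$) gives $u\ge0$ a.e., i.e.\ $u$ is feasible for \eqref{P}.

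For the second assertion, let $\bar u_n$ be global solutions of \eqref{Ph} with $\bar u_n\rightharpoonup u^\star$ in $U$. By the first part $u^\star$ is feasible for \eqref{P}, hence $J(\bar u)\le J(u^\star)$ by global optimality of $\bar u$. Assumption \ref{A4}, applied to $\bar u_n\rightharpoonup u^\star$, gives $J(u^\star)\le\liminf_{n\to\infty}J_n(\bar u_n)$. For the reverse inequality I would take the recovery sequence $\{v_n\}$ of \ref{A5}, chosen (as in the discretizations we have in mind) feasible for \eqref{Ph}; then minimality of $\bar u_n$ over the discrete feasible set gives $J_n(\bar u_n)\le J_n(v_n)$, so $\limsup_{n\to\infty}J_n(\bar u_n)\le\limsup_{n\to\infty}J_n(v_n)\le J(\bar u)$. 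Chaining these, $J(\bar u)\le J(u^\star)\le\liminf_n J_n(\bar u_n)\le\limsup_n J_n(\bar u_n)\le J(\bar u)$, so $J(u^\star)=J(\bar u)$ and $u^\star$, being feasible, is a global solution of \eqref{P}.

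The crux of the argument is the limit passage in the first part: translating the $n$-dependent, cell-local information $\int_T u_n\ge0$ into the mesh-independent pointwise bound $u\ge0$. The averaging operator $P_n$, together with the strong convergence $P_n\psi\to\psi$ in $L^2$, is exactly the bridge, and this is the only place where $\int_T\phi_n^i\ge0$ enters. In the second part the sole delicate point is that the sequence from \ref{A5} must be admissible for \eqref{Ph}; if \ref{A5} only supplies $v_n\in U_n$, one has to construct a discretely feasible recovery sequence, which is possible in the concrete FE settings referenced.
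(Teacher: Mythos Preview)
Your proof is correct, and the second part matches the paper's argument verbatim (including the tacit use of feasibility of the recovery sequence $v_n$ for \eqref{Ph}, which you rightly flag; the paper makes the same implicit assumption without comment). For the first part the paper takes a slightly different but equivalent route: instead of the averaging operator $P_n$, it fixes a compact $K\subset\Omega$, sets $K_n:=\bigcup\{\bar T:T\in\T_n,\ \bar T\cap K\ne\emptyset\}$, observes $\int_{K_n}u_n\ge0$, and passes to the limit via $\chi_{K_n}\to\chi_K$ in $L^2(\Omega)$ (dominated convergence plus \ref{A2}). Both arguments rest on the same elementary fact $\int_T u_n\ge0$ and both pair the weakly convergent $u_n$ against a mesh-adapted test function that converges strongly in $L^2$; your version with $P_n\psi$ is arguably a bit cleaner, since $P_n\psi\to\psi$ follows from uniform continuity with an explicit rate, whereas the paper's approach handles all compact sets at once and then invokes a density argument.
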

\begin{proof}
	Let the feasible points $u_n$ of \eqref{Ph} converge weakly in $U$ to $u$.
	We show that $u$ is feasible for \eqref{P}.
 Let $K\subset \Omega$ be compact and non-empty. We set
 \[K_n := \bigcup_{T\in \T_n:\bar T\cap K\ne\emptyset} \bar T.\]
 Then $\int_{K_n} u_n \ge0$ for all $n \in \N$. By dominated convergence and condition \ref{A2},
 $\chi_{K_n}\to \chi_K$ in $L^2(\Omega)$, which implies
 $\int_{K_n} u_n \to \int_K u$. Hence $\int_K u \ge0$ for all such compact $K$,
 and a density argument implies $u\ge0$.

 Now, let $\bar u_n$ be globally optimal for \eqref{Ph}
 and denote by $\tilde u$ the weak limit.
 As in the first part of the proof, we can show $\tilde u \ge 0$.
 Due to \ref{A4}, \ref{A5}, we have
 $
 J(\tilde u) \le \liminf_{n \to \infty} J_n(\bar u_n) \le \liminf_{n \to \infty} J_n(v_n)
 \le\limsup_{n \to \infty} J_n(v_n) \le J(\bar u).
 $ Since $\tilde u$ is feasible for \eqref{P}, and $\bar u$ is a global solution,
 it follows that $\tilde u$ is a global solution.
\end{proof}

\section{An example with non-feasible limit}

We consider the following simple optimal control problem:
\[
 \min_{(y,u)\in H^1(\Omega)\times L^2(\Omega)} \|y-y_d\|_{L^2(\Omega)}^2 + \alpha \, \|u\|_{L^2(\Omega)}^2
\]
subject to
\[
 u\ge 0
 \]and \[\int_\Omega \nabla y\cdot\nabla v + y\,v= \int_\Omega u\,v \ \forall v\in H^1(\Omega).
\]
Here, $\Omega\subset \R^d$ is a bounded domain with polygonal boundary. We set $y_d:=-1$ and $\alpha>0$.
It is easy to check that $(\bar y,\bar u)=(0,0)$ with $J(\bar u)=|\Omega|$ is the unique solution of this problem:
due to the maximum principle it holds $y\ge0$ for all feasible pairs $(y,u)$.

We discretize this problem by finite elements on simplicial decompositions $\T_n$ of $\Omega$.
As finite element spaces we choose standard (discontinuous or continuous) Lagrange elements of polynomial degree $k\ge1$.
Further, we define
$J_n(u_n) := \norm{y_n(u_n) - y_d}_{L^2(\Omega)}^2 + \alpha \, \norm{u_h}_{L^2(\Omega)}^2$,
where $y_n(u_n)$ is the solution of a suitably discretized state equation.
Following standard arguments \cite{hpuu} it is easy to prove that \ref{A1}--\ref{A5} are
satisfied.

We denote by $(\hat\psi_1\dots \hat \psi_m)$ the Lagrange basis of order $k$ on the reference simplex $\hat T$.
We will show that if there is a basis function with negative integral, then the
solutions of the discretized problem will converge weakly to a non-feasible limit $u$.

\begin{theorem}
 Assume that $\int_{\hat T} \hat \psi_j<0$ for some $j\in \{1,\dots, m\}$.
 Denote by $\bar u_n$ the unique solution of the discretized problem.
 Then it holds
 $\bar u_n\rightharpoonup u$ (along a subsequence), where $u$ does not satisfy $u\ge0$.
\end{theorem}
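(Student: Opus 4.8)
The plan is to exhibit, for a suitably small parameter $\lambda>0$, an admissible sequence of discrete controls $w_n$ for the discretized problem whose cost satisfies $\limsup_{n\to\infty}J_n(w_n)<|\Omega|=J(\bar u)$ and whose weak limit is a \emph{negative} constant. Optimality of $\bar u_n$ then gives $J_n(\bar u_n)\le J_n(w_n)$ for every $n$, hence $\limsup_{n\to\infty}J_n(\bar u_n)<|\Omega|$. Since $\alpha\,\norm{\bar u_n}_{L^2(\Omega)}^2\le J_n(\bar u_n)$ is bounded, $\{\bar u_n\}$ is bounded in $U$, so a subsequence converges weakly, $\bar u_n\rightharpoonup u$. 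By \ref{A4}, $J(u)\le\liminf_{n\to\infty}J_n(\bar u_n)<|\Omega|$. If $u$ were feasible for \eqref{P}, i.e.\ $u\ge0$, this would contradict the global optimality of $\bar u$ (which has $J(\bar u)=|\Omega|$). Hence $u\not\ge0$, which is the claim.

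\textbf{Construction of the competitor.} The heart of the argument is to build $w_n$. First I would produce a fixed reference profile $\hat w=\sum_m\hat\lambda_m\hat\psi_m$ with $\hat\lambda_m\ge0$ for all $m$, at least one $\hat\lambda_m>0$, and $\int_{\hat T}\hat w<0$, such that the function $w_n$ defined cellwise by $w_n|_T:=\lambda\,\hat w\circ F_T^{-1}$ (with $F_T$ the affine reference map of $T\in\T_n$, $\det B_T$ its Jacobian, $|\det B_T|=|T|/|\hat T|$) is conforming, $w_n\in U_n$, and has non-negative coordinates with respect to the chosen Lagrange basis. In the discontinuous case one simply takes $\hat w=\hat\psi_j$. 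In the continuous case one takes $\hat w=\sum_{m\in\mathcal O}\hat\psi_m$, where $\mathcal O$ is the orbit of the node $\hat a_j$ associated to $\hat\psi_j$ under the symmetry group of the reference simplex: the coefficient assigned to a mesh node then depends only on its combinatorial type (vertex / edge-interior / face-interior / cell-interior), which is shared by all elements containing it in a conforming simplicial mesh, so $w_n$ is continuous; and the symmetry of $\hat T$ forces $\int_{\hat T}\hat\psi_m=\int_{\hat T}\hat\psi_j<0$ for every $m\in\mathcal O$, so $\int_{\hat T}\hat w<0$. Either way, $w_n$ is the same scaled pullback of $\hat w$ on every cell.

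\textbf{Passing to the limit in the cost.} Because the discrete cost uses the exact $L^2$-norms, $\norm{w_n}_{L^2(\Omega)}^2=\sum_{T\in\T_n}|\det B_T|\,\lambda^2\norm{\hat w}_{L^2(\hat T)}^2=\lambda^2 c_1|\Omega|$ with $c_1:=\norm{\hat w}_{L^2(\hat T)}^2/|\hat T|>0$, exactly for all $n$; and $\int_T w_n=\lambda|\det B_T|\int_{\hat T}\hat w$. A standard oscillation/Riemann-sum argument (test against smooth functions, then use density and the bound on $\norm{w_n}$) shows $w_n\rightharpoonup\bar c$ in $U$, where $\bar c:=\lambda\int_{\hat T}\hat w/|\hat T|=-\lambda\beta$, $\beta:=-\int_{\hat T}\hat w/|\hat T|>0$, is a negative constant. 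By the convergence properties of the discretized state equation used to verify \ref{A4}--\ref{A5} (cf.\ \cite{hpuu}), $w_n\rightharpoonup\bar c$ implies $y_n(w_n)\to y(\bar c)$ strongly in $L^2(\Omega)$, and $y(\bar c)=\bar c$ is the constant solution; with $y_d=-1$ this gives $\norm{y_n(w_n)-y_d}_{L^2(\Omega)}^2\to(1-\lambda\beta)^2|\Omega|$. Hence $J_n(w_n)\to|\Omega|\big((1-\lambda\beta)^2+\alpha c_1\lambda^2\big)$, a quantity equal to $|\Omega|$ at $\lambda=0$ with derivative $-2\beta|\Omega|<0$ there; choosing $\lambda>0$ small makes $\limsup_{n\to\infty}J_n(w_n)<|\Omega|$, as required by the first paragraph.

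\textbf{Main obstacle.} I expect the only non-routine point to be the construction of the conforming competitor in the continuous finite element case: one must make every cell carry the same negative-integral reference profile while keeping the assembled function in $U_n$ with non-negative coordinates, and it is precisely the symmetry of the reference simplex (equality of the integrals of all reference basis functions within one symmetry orbit) that makes this possible. The remaining ingredients — boundedness of $\bar u_n$, the identification of the weak limit of $w_n$ via a Riemann sum, the weak-to-strong convergence of discrete states, and the final contradiction through \ref{A4} — are standard.
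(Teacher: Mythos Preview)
Your argument is correct and follows the same overall architecture as the paper's: build an admissible discrete competitor with strictly smaller cost than $J(\bar u)=|\Omega|$, deduce $J_n(\bar u_n)<|\Omega|$, extract a weak limit, and use \ref{A4} to get a contradiction with feasibility. The two proofs differ in the construction of the competitor and, more substantially, in how its cost is estimated.

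For the competitor, the paper simply takes $w_n:=\sum_{i\in I_n}\phi_n^i$ with $I_n:=\{i:\int_\Omega\phi_n^i<0\}$ and observes (implicitly using the same symmetry fact you make explicit) that $w_n|_T$ is the pull-back of $\hat w:=\sum_{j:\int_{\hat T}\hat\psi_j<0}\hat\psi_j$; this covers the continuous and discontinuous cases at once without a separate orbit construction. For the cost, the paper avoids any limit passage on the competitor: setting $z_n:=y_n(w_n)$ and testing the discrete state equation with $z_n$ and with the constant $1$ yields $\|z_n\|_{L^2}\le\|w_n\|_{L^2}=:M$ and $\int_\Omega z_n=\int_\Omega w_n=:-\beta<0$, from which one gets the \emph{uniform} quadratic bound $J_n(t\,w_n)\le(1+\alpha)M^2t^2-2\beta t+|\Omega|$, minimised at a fixed $\hat t>0$ to give $J_n(\hat t\,w_n)\le|\Omega|-\delta$ for \emph{all} $n$. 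Your route instead identifies $w_n\rightharpoonup-\lambda\beta$ and invokes strong convergence of the discrete states to pass to the limit in $J_n(w_n)$, obtaining only an asymptotic bound. Both are valid; the paper's estimate is more elementary and self-contained (no Riemann-sum argument, no appeal to convergence of the state discretisation), while your approach makes the role of the negative weak limit of the competitor more visible and is closer in spirit to a $\Gamma$-convergence argument.
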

\begin{proof}
We define $I_n:=\{i: \ \int_\Omega \phi_n^i<0\}$. Due to the assumption this set is non-empty.
We set $w_n :=\sum_{i\in I_n}\phi_n^i\in U_n$. Then for each element $T\in \T_n$
the function $w_n|_T$ is an affine transformation of $\hat w:=\sum_{j:\ \int_{\hat T} \hat \psi_j<0}\hat\psi_j$.
Consequently, it holds $\|w_n\|_{L^2(\Omega)}^2 = \sum_{T\in\T_n} |T| \, |\hat K|^{-1}\|\hat w\|_{L^2(\hat K)}^2 =|\Omega| \, |\hat K|^{-1}\|\hat w\|_{L^2(\hat K)}^2=:M^2$,
which shows that $\{w_n\}_{n \in \N}$ is uniformly bounded.
In addition, it holds \[\int_\Omega w_n = \sum_{T\in\T_n} |T| \, |\hat K|^{-1}\int_{\hat K} \hat w =|\Omega| \, |\hat K|^{-1}\int_{\hat K} \hat w=:-\beta<0.\]
We set $z_n := y_n(w_n)$.
Testing the discretized equation by $z_n$ and $1$ yields
$L_n := \|z_n\|_{L^2(\Omega)}\le \|w_n\|_{L^2(\Omega)}$ and
$\int_\Omega z_n =\int_\Omega w_n =- \beta<0$, respectively.

Then it holds $J_n(t\,w_n) = (t^2\,L_n^2-2\,t\,\beta+|\Omega|) + \alpha \, t^2\,M^2 \le (1+\alpha)\,M^2 \, t^2 - 2 \, t \, \beta + |\Omega|$.
For the choices $\hat t:=\frac{\beta}{(1 +\alpha) \, M^2}>0$
and $u_n:=\hat t \, w_n$ we obtain
$J_n(u_n)\le |\Omega|-\delta<|\Omega|=J(\bar u)$
with
$\delta := \beta \, \hat t > 0$.
This shows that $J_n(\bar u_n)\le |\Omega|-\delta < J(\bar u)$.

Let $\bar u_n\rightharpoonup u$ in $L^2(\Omega)$ (along a subsequence) and, consequently,
$y_n(\bar u_n) \rightharpoonup y$ in $H^1(\Omega)$.
By standard arguments, $(y,u)$ satisfy the weak formulation of the partial differential equation.
Moreover, as in the proof of Theorem \ref{theo31} it follows
$J(u)\le |\Omega| - \delta < |\Omega| = J(\bar u)$. This implies that $(y,u)$ cannot be feasible, and consequently $u\ge 0$
is violated.
\end{proof}

Numerical experiments show that basis functions with negative integral appear for sufficiently large
$k$ depending on the spatial dimension.
For the standard Lagrangian elements, we found the following situation:
\begin{enumerate}[label={$d=\arabic*$: },itemindent=*]
\item basis function have non-negative integrals for $k\in\{1,2,3,4,5,6,7,9\}$, \\but not for
 $k\in \{8,10,11\}$,
 \item basis function have non-negative integrals for $k\in\{1,2,3,5\}$, \\but not for
 $k\in \{4,6,7,8\}$,
 \item basis function have non-negative integrals for $k\in\{1,3\}$, \\but not for
 $k\in \{2,4,5,6\}$.
\end{enumerate}
Here, in particular the situation in dimension $3$ is remarkable: A naive control discretization by
$P^2$ elements with non-negativity constraints on the coefficients may fail to produce
approximations with feasible limits!

Let us remark that similar results can be proven for problems with homogeneous
Dirichlet boundary conditions
and for more general discretizations using an affine family of finite elements.

\newenvironment{acknowledgement}{\begin{small}\paragraph{Acknowledgements}}%
                           {\end{small}}
\begin{acknowledgement}
The first author was partly supported by DFG grant Wa 3626/1-1.
\end{acknowledgement}

\vspace{\baselineskip}

\bibliography{pamm}
\bibliographystyle{plain}

\end{document}